\documentclass[10pt]{amsart}
\usepackage{amssymb,latexsym,amsmath,amsthm,enumitem,mathrsfs,geometry}
\usepackage{fullpage}
\usepackage{fancyhdr}

\usepackage{mathrsfs}
\usepackage{hyperref}
\usepackage{lineno}
\usepackage{diagbox}
\usepackage{array}
\usepackage{tikz}
\usetikzlibrary{arrows}
\usetikzlibrary{positioning}
\usepackage{float}
\usepackage{pgfplots}
%\usepackage{refcheck}
 %This makes theorems that are lettered rather than numbered
%Courtesy of Phillip Matchett Wood

%SIDE MARGIN DEFINITION

%This version labels everything according to the section within which it lies
\newtheorem{thm}{Theorem}[section]
\newtheorem*{thm*}{Theorem}
\newtheorem{prop}[thm]{Proposition}
\newtheorem{lemma}[thm]{Lemma}

\newcommand{\beq}{\begin{equation}}
\newcommand{\eeq}{\end{equation}}

\usepackage[titletoc,title]{appendix}

\newcommand{\Q}{\mathbb{Q}}
\newcommand{\R}{\mathbb{R}}
\newcommand{\F}{\mathbb{F}}
\newcommand{\Z}{\mathbb{Z}}
\newcommand{\C}{\mathbb{C}}

\newcommand{\Sym}{\mathrm{Sym}}

\newcommand{\kommentar}[1]{}
%[section]

\newtheorem*{remark*}{Remark}

\setcounter{lemmaletter}{1}

\begin{document}
\numberwithin{equation}{section}

\title{Extremal primes for elliptic curves without complex multiplication}

\author[David]{C. David}
\address{Department of Mathematics, Concordia University, 1455 de Maisonneuve West, Montreal, QC H3G 1M8, Canada}
\email{chantal.david@concordia.ca}

\author[Gafni]{A. Gafni}
\address{Department of Mathematics, The University of Mississippi, Hume Hall 305, University, MS 38677, USA}
\email{ayla.gafni@gmail.com}

\author[Malik]{A. Malik}
\address{Department of Mathematics, Rutgers University, 110 Frelinghuysen Rd., Piscataway, NJ 08854, USA}
\email{amita.malik@rutgers.edu}

\author[Prabhu]{N. Prabhu}
\address{Department of Mathematics and Statistics, Queen's University, 48 University Avenue, Kingston, ON K7L 3N6, Canada}
\email{neha.prabhu@queensu.ca}

\author[Turnage-Butterbaugh]{C. Turnage-Butterbaugh}
\address{Department of Mathematics and Statistics, Carleton College, 1 North College Street, Northfield, MN 55057, USA}
\email{cturnageb@carleton.edu}

\keywords{}
\subjclass[2010]{11G05, 11N05
}
\thanks{}

\date{\today}

\begin{abstract} Fix an elliptic curve $E$ over $\mathbb{Q}$. An {\it extremal prime} for $E$ is a prime $p$ of good reduction such that the number of rational points on $E$ modulo $p$ is maximal or minimal in relation to the Hasse bound, i.e. $a_p(E) = \pm \left[ 2 \sqrt{p} \right]$. Assuming that all the symmetric power $L$-functions associated to $E$ have analytic continuation for all $s \in \C$, satisfy the expected functional equation and the Generalized Riemann Hypothesis, we provide upper bounds for the number of extremal primes when $E$ is a curve without complex multiplication. 
In order to obtain this bound, we use explicit equidistribution for the Sato-Tate measure as in the work of Rouse and Thorner \cite{RT2017}, and refine certain intermediate estimates taking advantage of the fact that extremal primes are less probable than primes where $a_p(E)$ is fixed because of the Sato-Tate distribution.
\end{abstract}

\maketitle

\section{Introduction}
Let $E$ denote an elliptic curve over $\Q$. For a prime $p$ of good reduction, $E$ reduces to an elliptic curve over the finite field $\F_p$, and we denote by $a_p(E)$ the trace of the Frobenius automorphism acting on the points of $E$ over ${\overline {\F}}_p$. Then  $a_p(E) = p+1-\# E(\F_p)$, and $|a_p(E) | \leq 2 \sqrt{p}$ (the Hasse bound). The following conjecture for the distribution of the normalized traces $a_p(E)/2 \sqrt{p}$ in $[-1,1]$ was formulated independently by Sato and Tate.

\begin{thm}[Sato-Tate conjecture]\label{ST} Let $E$ be an elliptic curve without complex multiplication over $\Q$. Let $\alpha, \beta \in \R$ with $0 \leq \alpha \leq \beta \leq 1$.
Then, as $x \rightarrow \infty$, 
$$\frac{1}{\pi(x)} \# \left\{ p \leq x \;:\; \frac{a_p(E)}{2 \sqrt{p}} \in ( \alpha, \beta ) \right\} \sim \frac{2}{\pi} \int_\alpha^\beta \sqrt{1-t^2} \,dt .$$
\end{thm}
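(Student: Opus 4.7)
The plan is to invoke the Weyl equidistribution criterion. Write $a_p(E)/(2\sqrt{p}) = \cos\theta_p$ with $\theta_p \in [0,\pi]$, so that the Sato--Tate density $\frac{2}{\pi}\sqrt{1-t^2}\,dt$ on $[-1,1]$ pulls back to $\frac{2}{\pi}\sin^2\theta\, d\theta$ on $[0,\pi]$, which is precisely the pushforward of Haar measure on conjugacy classes of $SU(2)$. An orthonormal basis for continuous class functions on $SU(2)$ is given by the characters $U_m(\cos\theta) = \sin((m+1)\theta)/\sin\theta$ of the irreducible $(m+1)$-dimensional representations ($m \geq 0$). By Weyl, it then suffices to show that for every fixed $m \geq 1$,
\[
\frac{1}{\pi(x)}\sum_{p \leq x} U_m(\cos\theta_p) \longrightarrow 0 \qquad \text{as } x \to \infty.
\]

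The next step is to recognize $U_m(\cos\theta_p)$ as the trace of $\mathrm{Sym}^m$ applied to the Satake parameter of $E$ at $p$. Consequently, up to a bounded number of bad primes and a tame contribution from higher prime powers,
\[
-\frac{L'}{L}(s, \mathrm{Sym}^m E) = \sum_{p}\frac{U_m(\cos\theta_p) \log p}{p^s} + O(1)
\]
for $\mathrm{Re}(s) > 1$. A standard Wiener--Ikehara argument, combined with partial summation, then yields the desired $o(\pi(x))$ bound provided each $L(s, \mathrm{Sym}^m E)$ admits analytic continuation to an open neighborhood of $\mathrm{Re}(s) \geq 1$ and is non-vanishing on the line $\mathrm{Re}(s) = 1$.

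Thus the entire problem reduces to showing, for every $m \geq 1$, that $L(s, \mathrm{Sym}^m E)$ is entire and non-zero on $\mathrm{Re}(s)=1$. The natural route is to establish that $\mathrm{Sym}^m E$ is a cuspidal automorphic representation of $GL_{m+1}/\Q$: analytic continuation and functional equation follow from the Langlands--Shahidi method or Rankin--Selberg integrals, and non-vanishing on $\mathrm{Re}(s) = 1$ follows from Jacquet--Shalika applied to $L(s, \mathrm{Sym}^m E \times \overline{\mathrm{Sym}^m E})$. This automorphy statement is the genuine and deep obstacle; it is exactly the content of the potential automorphy program of Clozel--Harris--Shepherd-Barron--Taylor and Barnet-Lamb--Geraghty--Harris--Taylor, which handles the relevant self-dual Galois representations via modularity lifting theorems together with potential automorphy through Dwork-type Calabi--Yau families. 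Once this input is granted, the equidistribution statement is a formal consequence of the Weyl criterion and standard analytic number theory.
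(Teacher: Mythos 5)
The paper does not prove Theorem~\ref{ST}: it is the Sato--Tate conjecture, and the sentence immediately following the statement attributes its proof to Taylor, in collaboration with Clozel, Harris and Shepherd-Barron (under the hypothesis, omitted from the statement of the theorem but noted in the text, that $E$ has a prime of multiplicative reduction). There is therefore no in-paper proof to compare against; the paper simply cites the literature. Your sketch does correctly identify the reduction: change of variables to $\theta_p$, Weyl's criterion with the Chebyshev polynomials $U_m$, and the standard Tauberian step which converts analyticity and non-vanishing of $L(s,\mathrm{Sym}^m E)$ in a neighborhood of $\mathrm{Re}(s)\ge 1$ into the desired $o(\pi(x))$ estimate. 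This matches what the paper itself records about the cited proof.

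However, your final reduction contains a genuine slip. You reduce to showing that $\mathrm{Sym}^m E$ is a cuspidal automorphic representation of $GL_{m+1}/\Q$ and then invoke Langlands--Shahidi and Jacquet--Shalika; but Clozel--Harris--Shepherd-Barron--Taylor do \emph{not} prove this. They prove \emph{potential} automorphy: automorphy of $\mathrm{Sym}^m E$ after base change to some totally real field $F$ depending on $m$. One cannot then apply Langlands--Shahidi or Jacquet--Shalika directly to $L(s,\mathrm{Sym}^m E)$ over $\Q$. Passing from automorphy over $F$ to the required analytic continuation and non-vanishing of the $L$-function over $\Q$ is an additional and non-trivial step, carried out via Brauer induction, which expresses $L(s,\mathrm{Sym}^m E)$ as a product of (possibly negative) integer powers of automorphic $L$-functions over intermediate fields; it is this that feeds the Tauberian argument, not automorphy on $GL_{m+1}/\Q$. (Unconditional automorphy of $\mathrm{Sym}^m E$ over $\Q$ for all $m$ was established only much later, by Newton and Thorne, long after the Sato--Tate theorem.) As written, your proposal would not close without this Brauer induction step being supplied.
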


If $E$ has at least one prime of multiplicative reduction, the Sato-Tate conjecture was proven by Taylor \cite{Taylor2008}, in collaboration with Clozel, Harris and Shepherd-Barron \cite{CHT2008,HSBT2010}.

We study in this paper a refinement of the Sato-Tate conjecture concerning the distribution of the primes $p$ which fall at the extremes of this distribution, i.e. the primes $p$ such that $a_p(E) = \pm [ 2 \sqrt{p} ]$, where for any real number $y$, $[ y ]$ denotes the integer part of $y$. Then $\# E(\F_p)$ is maximal when $a_p(E) = - [2 \sqrt{p}]$ and minimal when $a_p(E) =  [2 \sqrt{p}]$.

Extremal primes were first studied by James {\it et al.} \cite{James2016} who conjectured (as refined by James and Pollack \cite{JP2017}) that, as $x\to \infty$,
\begin{eqnarray}\nonumber
&& \hspace{-1cm} \# \left\{ p \leq x \;:\; a_p(E) = [ 2 \sqrt{p} ] \right\} \\
&\sim& \begin{cases}  \displaystyle \frac{8}{3\pi} \; \frac{x^{1/4}}{\log{x}}, & \mbox{if $E$ does not have complex multiplication,} \\  \\ \label{conj-EP-CM}
\displaystyle \frac{2}{3\pi} \; \frac{x^{3/4}}{\log{x}}, & \mbox{if $E$ has complex multiplication.} \end{cases}
\end{eqnarray}

By symmetry, an analogous conjecture has been stated for extremal primes with $a_p(E) = -[ 2 \sqrt{p} ] $.
It is enlightening to compare this conjecture with another refinement of the Sato-Tate conjecture, namely the Lang-Trotter conjecture.
For any fixed value $h \in \Z$, the Lang-Trotter conjecture \cite{LT1976} predicts that
\begin{eqnarray} \label{conj-EP}
\pi_{E, h}(x) = \# \left\{ p \leq x \;:\; a_p(E) = h  \right\} \sim C_{E,h} \frac{x^{1/2}}{\log{x}}
\end{eqnarray}
as $x\to \infty$, where $C_{E,h}$ is a specific constant \footnote{If $h=0$, then it is additionally assumed that $E$ does not have complex multiplication. The case  $h=0$ and $E$ with complex multiplication was treated in \cite{Deuring}.}. Comparing \eqref{conj-EP-CM} and \eqref{conj-EP}, we notice that for non-CM curves, there are expected to be fewer extremal primes than primes with a fixed value of $a_p(E)$, since the extremal primes are at the edge of the Sato-Tate distribution of Theorem \ref{ST}, where the measure is small.
On the other hand, for CM curves, an excess of extremal primes is predicted, since in this case, the measure for the distribution of $a_p(E)/2 \sqrt{p}$ in $[\alpha, \beta] \subseteq [-1,1] \setminus\{0\}$ is given by 
$$
\mu_{\text CM}([\alpha, \beta]) = \frac{1}{2 \pi} \int_\alpha^\beta \frac{dt}{\sqrt{1-t^2}}.
$$

The asymptotic \eqref{conj-EP-CM} for CM curves was proven by James and Pollack \cite{JP2017}. In a subsequent paper by Agwu {\it et al.}\cite{AHJKL} the authors obtained asymptotics for a refined question for CM curves, namely the primes where $a_p(E)$ falls within a small range of the end of the Hasse interval. In this article, we focus on the case of non-CM curves.

Like the Lang-Trotter conjecture, the asymptotic \eqref{conj-EP-CM} for non-CM curves seems to be out of reach with current techniques. An asymptotic was proven to hold on average for non-CM elliptic curves $E / \Q$ in the Ph.D. thesis of Giberson \cite{Giberson} (see also \cite{GJ}). However, no non-trivial upper bounds are known for a single curve $E/\Q$. The goal of this paper is to obtain such upper bounds. 

Let $N_E$ denote the conductor of the elliptic curve $E$, and define
\begin{equation} \label{normL}
L(s,E) = \prod_{p \nmid N_E}\left( 1-\frac{\alpha_p(E)}{p^s}\right)^{-1}\left( 1-\frac{\overline{\alpha}_p(E)}{p^s}\right)^{-1} \; \prod_{p \mid N_E} \left( 1-\frac{a_p(E)}{p^s}\right)^{-1}
\end{equation}
where we have normalized the $L$-function so that $\alpha_p(E),\,\overline{\alpha}_p(E)$ satisfy 
$$\#E(\mathbb{F}_p)=p+1-\sqrt{p}(\alpha_p(E)+\overline{\alpha}_p(E)) \;\; \mbox{for $p \nmid N_E$}.$$ For any integer $n \geq 0$, the symmetric power $L$-functions of $E$ are given by
\[
L(s,\Sym^n(E)) = \prod_{p|N_E}L_p(s,\Sym^n(E))\prod_{p\nmid N_E}\prod_{j=0}^{n}\left(1-\frac{\alpha_p(E)^j\overline{\alpha}_p(E)^{n-j}}{p^s} \right)^{-1},
\]
where the Euler factors $L_p(s,\Sym^n(E))$ at the bad primes are described in Appendix \ref{A3}. 

The aforementioned proof of the Sato-Tate conjecture was obtained by proving that if $E$ has at least one prime of multiplicative reduction, then the functions  $L(s,\Sym^n(E))$ have meromorphic continuation to the whole complex plane, satisfy the functional equation \ref{FE-Sym}, and are analytic and non-zero for $\mbox{Re}(s) \geq 1$.
(See \cite[Theorem B]{Taylor2008}, with the difference there that the $L$-functions are not normalized.) 
To get an effective version of the Sato-Tate conjecture in \cite{RT2017}, the authors need to assume each $L(s,\Sym^n(E))$ has analytic continuation to the whole complex plane, and satisfies the Generalized Riemann Hypothesis (GRH). For convenience, we  include the function $L(s,\Sym^0(E)) = \zeta(s)$, which is analytic except for a simple pole at $s=1$.

 Under the same hypotheses, one can also obtain upper bounds for the Lang-Trotter conjecture.
This was carried out by K. Murty \cite{VKMurty} and extended by Bucur and Kedlaya to arbitrary motives  \cite{BC}. These results were improved recently by Rouse and Thorner \cite{RT2017}, who proved (under the same hypotheses as in Theorem \ref{main}, stated below) that
$$   \pi_{E,h}(x) \ll_{E,h} x^{3/4} (\log{x})^{-1/2}.
 $$
   In our case, taking advantage of the fact that extremal primes fall at the edge of the Sato-Tate interval, we refine the work of Rouse and Thorner to obtain a better upper bound for the number of extremal primes.
\begin{thm} \label{main}
	Let $E$ be a non-CM elliptic curve over $\Q$. Assume that for any $n \geq 0$, the $L$-functions $L(s,\Sym^n(E))$ have analytic continuation to the entire complex plane (except for a simple pole at $s=1$ when $n=0$), satisfy the functional equation \eqref{FE-Sym} and the Generalized Riemann Hypothesis. 
	 Then \begin{equation*}
	\# \{  x < p \leq  2x \;:\; a_p(E) = [2\sqrt{p}] \} \ll_E {x^{1/2}}.
	\end{equation*}
\end{thm}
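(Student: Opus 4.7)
The approach is to follow the effective Sato-Tate machinery of Rouse-Thorner \cite{RT2017}, but with parameters tuned to exploit the endpoint nature of the extremal-prime window. Writing $a_p(E) = 2\sqrt{p}\cos\theta_p$ with $\theta_p \in [0,\pi]$, the condition $a_p(E) = [2\sqrt{p}]$ is equivalent to $\cos\theta_p \in (1 - 1/(2\sqrt{p}), 1]$, which for $p \in (x, 2x]$ forces $\theta_p \in [0, \delta]$ where $\delta = c_0 x^{-1/4}$ for an absolute constant $c_0$. So it suffices to bound $\#\{x < p \leq 2x : \theta_p \in [0, \delta]\}$.

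Next, I would construct a nonnegative polynomial majorant $F(\theta) = \sum_{n=0}^M b_n U_n(\cos\theta)$ of degree $M$ (to be chosen) in the Chebyshev-$U$ basis, satisfying $F(\theta) \geq \mathbf{1}_{[-\delta, \delta]}(\theta)$ on $[-\pi,\pi]$. Then
\[
\#\{x < p \leq 2x : a_p(E) = [2\sqrt{p}]\} \;\leq\; \sum_{x < p \leq 2x} F(\theta_p) \;=\; b_0 \bigl(\pi(2x) - \pi(x)\bigr) + \sum_{n=1}^M b_n \sum_{x < p \leq 2x} U_n(\cos\theta_p).
\]
Under GRH for each $L(s, \Sym^n E)$, the standard explicit-formula argument (as in \cite{RT2017}) gives, for $n \geq 1$, a bound of the shape $\sum_{x < p \leq 2x} U_n(\cos\theta_p) \ll (n+1)\, x^{1/2} \log(N_E^{n+1} x)$. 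Combining, the count is dominated by $b_0 \cdot x/\log x + x^{1/2}\sum_{n\geq 1} |b_n|(n+1)$ up to logarithmic factors depending on $N_E$.

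The crux is the choice of $M$ and $F$. I would take a Fej\'er-type kernel of degree $M \asymp x^{1/4}$ (so that $1/M \asymp \delta$). For this kernel, a direct computation of the Chebyshev-$U$ expansion shows $|b_n| \ll 1/M^2$ uniformly, and in particular $b_0 = \int F\, d\mu_{ST} \ll 1/M^2 \ll x^{-1/2}$, while $\sum_{n\geq 1} |b_n|(n+1) \ll 1$. The saving of an extra $1/M$ factor in $b_0$ (compared to the $b_0 \asymp 1/M$ one would see for a generic interval in the interior of $[-1,1]$, as in the Lang-Trotter setting) is the key \emph{endpoint gain}: it comes from the fact that the kernel is concentrated near $\theta = 0$, where the Sato-Tate density $\sin^2\theta$ vanishes to second order. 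Substituting the two estimates yields $b_0 \cdot x/\log x \ll x^{1/2}/\log x$ and $x^{1/2}\sum_n |b_n|(n+1) \ll x^{1/2}$, giving the claimed bound $\ll_E x^{1/2}$ (with residual $\log N_E$ and $\log x$ factors absorbed into the $E$-dependent constant).

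The main obstacle is the careful bookkeeping of the Chebyshev-$U$ coefficients of the chosen majorant and verifying that $b_0$ picks up the extra $1/M$-saving from the vanishing density at the endpoint. This is precisely the refinement promised in the abstract: Rouse-Thorner's $x^{3/4}$ Lang-Trotter bound is optimized for intervals where $\sin^2\theta \asymp 1$, and replacing their test function with one adapted to the edge of the Sato-Tate support saves the factor $x^{1/4}$ required here.
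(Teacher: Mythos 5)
Your high-level strategy matches the paper's exactly: pass to Sato--Tate angles so that $a_p(E)=[2\sqrt p]$ forces $\theta_p\in[0,\delta]$ with $\delta\asymp x^{-1/4}$, build a degree-$M$ majorant in the Chebyshev-$U$ basis with $M\asymp x^{1/4}$, and exploit the quadratic vanishing of $\sin^2\theta$ at $\theta=0$ to gain a factor of $M$ over the Lang--Trotter count. However, there are two substantive gaps in the sketch as written.

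First, the claim that ``a direct computation'' gives $|b_n|\ll 1/M^2$ for a Fej\'er-type kernel is not correct. A raw Fej\'er majorant $\Delta_M(\theta)$ has $b_0 \asymp \int_0^{\pi}\Delta_M(\theta)\sin^2\theta\,d\theta\asymp 1/M$, \emph{not} $1/M^2$: for $\theta\gg 1/M$ the tail $\Delta_M(\theta)\asymp 1/(M\theta^2)$ exactly cancels the density $\sin^2\theta\asymp\theta^2$, leaving a constant integrand over all of $[1/M,\pi]$, which contributes $\asymp 1/M$. To beat this one needs the full Beurling--Selberg extremal majorant (Fej\'er kernel \emph{plus} the Vaaler correction), and establishing the uniform $\hat F^+_{I,M}(n)\ll 1/M^2$ is in fact the paper's main new technical contribution (Proposition~\ref{improved-FC}); it hinges on the cancellation $\sin(2\pi(n+1)\beta)\sin(2\pi\beta)\ll 1/M^2$ in~\eqref{Fejer-integral} and takes real work, not a one-line verification.

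Second, the explicit-formula input you quote, $\sum_{x<p\le 2x}U_n(\cos\theta_p)\ll(n+1)x^{1/2}\log(N_E^{n+1}x)$, is too weak and does \emph{not} give the theorem. Since $\log(N_E^{n+1}x)=(n+1)\log N_E+\log x$, your bound is quadratic in the degree $n$, and with $|b_n|\ll 1/M^2$ one computes
$x^{1/2}\sum_{n\le M}|b_n|(n+1)\log(N_E^{n+1}x)\asymp x^{1/2}\bigl(M\log N_E+\log x\bigr)\asymp x^{3/4}\log N_E$,
which wipes out the endpoint gain entirely and merely reproduces Rouse--Thorner's Lang--Trotter rate. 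The ``residual $\log N_E$ and $\log x$ factors'' cannot be absorbed into an $E$-dependent constant because they are multiplied by a power of $n$ and summed up to $n=M\asymp x^{1/4}$. What is actually needed is the sharper smoothed bound of Rouse--Thorner Proposition~3.5 (Proposition~\ref{newRT} here): $\sum_p U_n(\cos\theta_p)g_x(p)\log p\ll_E\delta_{n,0}x+\sqrt x\,n\log n$, where the conductor enters only additively as $+\log N_{E,n}\ll n\log N_E$ (linear, not quadratic, in $n$) and the smoothing $g_x$ eliminates the $\log x$. You also need the $1/\log x$ factor from comparing the unweighted prime sum to the $\log p$-weighted one (the paper's~\eqref{RS-1}), so that $\frac{1}{\log x}\cdot\frac{\sqrt x}{M^2}\sum_{n\le M}n\log n\asymp\sqrt x\,\frac{\log M}{\log x}\asymp\sqrt x$ rather than $\sqrt x\log x$. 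Without both of these refinements the argument lands short of the stated $x^{1/2}$.
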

\vspace{.2in}

\noindent{\bf Acknowledgements} This work was initiated at the WIN4 workshop held at the Banff International Research Station in August 2017, and the authors thank the organizers of the workshop and BIRS. The authors also thank Dimitris Koukoulopoulos, Jesse Thorner, and A. Raghuram for helpful discussions related to this paper, and Igor Shparlinski for important comments on a previous version of this paper. Turnage-Butterbaugh thanks the Max Planck Institute for Mathematics for their hospitality and support during July 2018. 
The authors would also like to thank the anonymous referee for useful comments which have improved the exposition of the paper.

\section{Explicit equidistiribution for the Sato-Tate measure}

In this section, we prove upper bounds on Fourier coefficients of certain trigonometric polynomials which approximate characteristic functions of intervals. By considering short intervals at the edge of the Sato-Tate distribution, we obtain upper bounds that are stronger than what one obtains for general intervals. 

We first briefly review  classical results on explicit equidistribution. We refer the reader to \cite[Chapter 1]{Mont} for a detailed exposition of trigonometric polynomials approximating the characteristic function on subintervals of $[0,1]$ with respect to the uniform measure, and for the notations in this section. By the change of variable $t = \cos\theta$, we can view the Sato-Tate measure for non-CM curves, stated in Theorem \ref{ST}, as the measure on $[0,\pi]$ given by
$$ 
\mu_{ST}([\alpha, \beta]) = \frac{2}{\pi} \int_{\alpha}^{\beta} \sin^2\theta \;d \theta,
$$
for $[\alpha, \beta] \subseteq [0, \pi]$.
To approximate the characteristic function of intervals in $[0, \pi]$ with respect to the Sato-Tate measure, one uses the Chebyshev polynomials of the second kind, denoted by $U_n$ and defined by the recurrence relation
\begin{align*}
U_0(x)&=1\\
U_1(x)&=2x\\
U_{n}(x)&=2xU_{n-1}(x)-U_{n-2}(x).
\end{align*}
We remark that the polynomials form an orthonormal basis with respect to the Sato-Tate measure on $[0,\pi]$. We refer to \cite{RT2017} for the proof of the following lemma, which follows directly from explicit uniform equidistribution.
	\begin{lemma}[{\cite[Lemma 1.3]{RT2017}}] \label{lemma-RT}
		Let $I = [\alpha, \beta] \subseteq[0,\pi]$, and let $M$ be a positive integer. There exist trigonometric polynomials $$F^{\pm}_{I,M}(\theta) = \sum\limits_{n=0}^M \hat{F}^{\pm}_{I,M}(n)U_n(\cos\theta)$$ that satisfy the following properties.
		\begin{itemize}
			\item For $0\leq \theta \leq \pi$, we have 
			\begin{equation*}\label{approximating-char-function}
			F^{-}_{I,M}(\theta)\leq \chi_I(\theta) \leq F^{+}_{I,M}(\theta).
			\end{equation*} 
			\item We have 
			\begin{equation*}
						|\hat{F}^{\pm}_{I,M}(0) - \mu_{ST}(I)|\leq \frac{4}{M+1}.
			\end{equation*}
			\item For $1\leq n\leq M$, we have 
			\begin{equation*}
			|\hat{F}^{\pm}_{I,M}(n)| \leq 4\left(\frac{1}{M+1} + \min\left\{\frac{\beta-\alpha}{2\pi}, \frac{1}{\pi n}\right\}\right).
			\end{equation*}
		\end{itemize}
	\end{lemma}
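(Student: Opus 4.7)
The lemma is an adaptation of the classical Beurling--Selberg/Vaaler construction of trigonometric polynomial majorants and minorants of indicator functions on the circle (see e.g.\ \cite[Ch.~1]{Mont}), passing from the uniform measure on $\R/2\pi\Z$ to the Sato--Tate weight $\tfrac{2}{\pi}\sin^2\theta\,d\theta$ on $[0,\pi]$. My plan is to first approximate the even reflection of $\chi_I$ on the full circle by an ordinary even trigonometric polynomial with the standard Vaaler bounds, and then re-expand that polynomial in the Chebyshev-$U$ basis in order to read off the Sato--Tate Fourier coefficients.

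Concretely, I would set $J:=I\cup(-I)=[-\beta,-\alpha]\cup[\alpha,\beta]\subseteq\R/2\pi\Z$, so that $\chi_J=\chi_I$ on $[0,\pi]$. Applying Vaaler's construction separately to $I$ and to $-I$ and summing yields trigonometric polynomials $V^\pm$ of degree $\leq M$ satisfying $V^-\leq\chi_J\leq V^+$ pointwise, with Fourier coefficients obeying $|\hat V^\pm(k)-\hat\chi_J(k)|\leq 2/(M+1)$ for every $k\in\Z$. By averaging $V^\pm(\theta)$ with $V^\pm(-\theta)$, which preserves both the pointwise inequalities and the Fourier bounds since $J$ is symmetric, one may assume $V^\pm$ is even, hence $V^\pm(\theta)=c_0+\sum_{n=1}^M c_n\cos(n\theta)$ with $c_0=\hat V^\pm(0)$ and $c_n=2\hat V^\pm(n)$ for $n\geq 1$. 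Setting $F^\pm_{I,M}:=V^\pm$, the property $F^-\leq\chi_I\leq F^+$ on $[0,\pi]$ is then immediate. To re-expand in the $U$-basis, use $\cos(n\theta)=T_n(\cos\theta)$ and the identities $T_0=U_0$, $T_1=U_1/2$, $T_n=(U_n-U_{n-2})/2$ for $n\geq 2$; collecting terms gives $F^\pm_{I,M}(\theta)=\sum_{n=0}^M\hat F^\pm_{I,M}(n)U_n(\cos\theta)$ with $\hat F^\pm_{I,M}(0)=c_0-c_2/2$, $\hat F^\pm_{I,M}(n)=(c_n-c_{n+2})/2$ for $1\leq n\leq M-2$, and $\hat F^\pm_{I,M}(M-1)=c_{M-1}/2,\ \hat F^\pm_{I,M}(M)=c_M/2$. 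Orthonormality of $\{U_n(\cos\theta)\}$ with respect to $\mu_{ST}$ then identifies these with the claimed Chebyshev--Sato--Tate coefficients.

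The quantitative bounds follow from two short computations. Using $\sin^2\theta=(1-\cos 2\theta)/2$ and the symmetry of $J$, one first verifies the identity $\mu_{ST}(I)=(\beta-\alpha)/\pi-\hat\chi_J(2)$, so that $\hat F^\pm_{I,M}(0)-\mu_{ST}(I)=(c_0-\hat\chi_J(0))-(c_2/2-\hat\chi_J(2))$ with each summand of absolute value $\leq 2/(M+1)$ by the Vaaler estimate, producing the bound $4/(M+1)$. For $n\geq 1$, one bounds $|\hat F^\pm_{I,M}(n)|\leq(|c_n|+|c_{n+2}|)/2$, and combines $|c_k|\leq 2|\hat\chi_J(k)|+4/(M+1)$ with the elementary estimate $|\hat\chi_J(k)|\leq \min((\beta-\alpha)/\pi,\,2/(\pi|k|))=2\min((\beta-\alpha)/(2\pi),\,1/(\pi|k|))$ to arrive at the claimed $4(1/(M+1)+\min((\beta-\alpha)/(2\pi),1/(\pi n)))$. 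The main obstacle I expect is the bookkeeping of two independent factor-of-$2$ losses---one from $J$ being a union of two intervals rather than one, and one from the $T_n\leftrightarrow U_n$ change of basis---which together account for the explicit constant $4$ in the statement; the conceptual crux is that the Sato--Tate weight manifests on the Fourier side solely through the single extra coefficient $\hat\chi_J(2)$, thereby reducing the problem to the classical uniform-measure construction.
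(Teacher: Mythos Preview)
Your proposal is correct and follows essentially the same approach as the paper (and as \cite{RT2017}, which the paper cites for this lemma without giving its own proof): symmetrize $I$ to $I\cup(-I)$ on the circle, apply the Beurling--Selberg/Vaaler majorant/minorant, and then pass from the cosine basis to the $U_n$ basis via $T_n=(U_n-U_{n-2})/2$. Indeed, the explicit construction $F^+_{I,M}(\theta)=S^+_{J,M}(\theta/2\pi)+S^+_{J,M}(-\theta/2\pi)$ that the paper writes down in the proof of Proposition~\ref{improved-FC} is exactly your $V^+$, and your identities $\hat F^\pm_{I,M}(0)=c_0-c_2/2$, $\hat F^\pm_{I,M}(n)=(c_n-c_{n+2})/2$ together with $\mu_{ST}(I)=\hat\chi_J(0)-\hat\chi_J(2)$ recover the stated bounds with the correct constant~$4$.
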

	The above lemma is valid for any interval $[\alpha, \beta] \subseteq [0, \pi]$. In our case, however, we are interested in counting at the edge of the Sato-Tate interval, where the measure is very small. More precisely, we will consider intervals where $t=\cos\theta$ is close to $1$, i.e. where $\theta$ is close to $0$. In this way, we obtain the following sharper estimate for the Fourier coefficients $\hat{F}^+_{I,M}(n)$, which will be key inputs in the proof of Theorem \ref{main}.

	\begin{prop}\label{improved-FC}
		\label{new-Fourier} Assume the setting and notation of Lemma \ref{lemma-RT}.
		If $I=[0, \frac{1}{M}] \subseteq [0,\pi]$, then for $0 \leq n \leq M$, 
		$$\hat{F}^+_{I,M}(n) \ll \frac{1}{M^2}.$$
	\end{prop}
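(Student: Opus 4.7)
The plan is to build an explicit majorant $F^+_{I,M}$ tailored to the small interval $I=[0,1/M]$ from a squared Fej\'er-type kernel sharply concentrated near $\theta=0$. The crucial gain over Lemma~\ref{lemma-RT} comes from the fact that the Sato--Tate density $\tfrac{2}{\pi}\sin^2\theta$ vanishes to order two at $\theta=0$: a kernel concentrated on scale $1/M$ has its $L^1(\mu_{ST})$-norm reduced by a factor of $1/M^2$ compared with its Lebesgue $L^1$-norm, and by the crude bound $|U_n(\cos\theta)|\le n+1$ this reduction transfers directly to every single Fourier coefficient.

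First, I would set $N = \lfloor M/2\rfloor$ and define
$$D_N(\theta) = \left(\frac{\sin(N\theta/2)}{\sin(\theta/2)}\right)^2, \qquad J(\theta) = D_N(\theta)^2.$$
Then $D_N$ is a nonnegative even trigonometric polynomial of degree $N-1$, so $J$ is a nonnegative even trigonometric polynomial of degree $2(N-1)\le M$, and hence lies in the span of $\{U_n(\cos\theta)\}_{0\le n\le M}$. For $\theta\in[0,1/M]$ we have $N\theta/2\le 1/4$, so $\sin(N\theta/2)\ge c_1(N\theta/2)$ for some absolute constant $c_1>0$, while $\sin(\theta/2)\le\theta/2$, giving $J(\theta)\ge c_1^4 N^4$ uniformly on $I$. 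I would then set $F^+_{I,M}(\theta) := c_1^{-4} N^{-4} J(\theta)$, a nonnegative trigonometric polynomial of degree at most $M$ which is $\ge 1$ on $I$ and hence majorizes $\chi_I$ on $[0,\pi]$.

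The core estimate to establish is
$$\int_0^\pi J(\theta)\sin^2\theta\,d\theta \ll N.$$
To prove this I would rewrite $J(\theta)\sin^2\theta = 4\cot^2(\theta/2)\sin^4(N\theta/2)$ and split the integral at $\theta=2/N$: on $[0,2/N]$ the bounds $\sin^4(N\theta/2)\le (N\theta/2)^4$ and $\cot^2(\theta/2)\le\pi^2/\theta^2$ give a contribution of order $N$, and on $[2/N,\pi]$ the bounds $\sin^4(N\theta/2)\le 1$ and $\cot^2(\theta/2)\le\pi^2/\theta^2$ give another contribution of order $N$. Combined with the trivial bound $|U_n(\cos\theta)|\le n+1$, this yields for every $0\le n\le M$
$$|\hat F^+_{I,M}(n)| \le (n+1)\cdot\frac{2}{\pi}\int_0^\pi F^+_{I,M}(\theta)\sin^2\theta\,d\theta \ll \frac{n+1}{N^3} \ll \frac{1}{M^2},$$
which is the required bound. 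The three properties of Lemma~\ref{lemma-RT} are then immediate for this $F^+_{I,M}$, since the bound just proved is stronger than each of them.

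The main obstacle will be identifying the right kernel: a single Fej\'er kernel $D_N$ has $\int_0^\pi D_N(\theta)\sin^2\theta\,d\theta\asymp 1$, which only produces Fourier coefficients of size $1/M$; the squared kernel $J=D_N^2$, a Jackson-type polynomial, buys an extra factor of $N^2$ of decay at the cost of halving the admissible degree, and this trade-off is precisely what upgrades the generic $1/M$ bound of Lemma~\ref{lemma-RT} to the desired $1/M^2$.
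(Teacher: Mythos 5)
Your proof is correct, but it follows a genuinely different route from the paper's. The paper proves the estimate for the \emph{specific} Beurling--Selberg majorant underlying Lemma~\ref{lemma-RT}: it decomposes $F^+_{I,M}-\chi_I$ into the Vaaler piece $V_M-s$ and the Fej\'er correction $\tfrac{1}{2(M+1)}\Delta_{M+1}$, bounds the Vaaler piece by the pointwise estimate $V_M(x)-s(x)\ll\min\bigl(1,(M|x|)^{-3}\bigr)$, and evaluates the Fej\'er piece by an exact orthogonality computation against $U_n(\cos 2\pi x)\sin^2(2\pi x)$. You instead discard that majorant and build a fresh one from a squared Fej\'er (Jackson-type) kernel at scale $1/N$, $N=\lfloor M/2\rfloor$, normalized to exceed $1$ on $[0,1/M]$; the gain then comes from the single estimate $\int_0^\pi D_N(\theta)^2\sin^2\theta\,d\theta\ll N$ (exploiting the $\sin^2$-vanishing of the Sato--Tate density at $\theta=0$) together with the crude bound $|U_n|\le n+1$. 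Your argument is more elementary and self-contained, and it makes the source of the $1/M^2$ gain transparent; the technical steps check out — $D_N^2$ has degree $2(N-1)\le M$ in $\cos\theta$, the lower bound $J\ge c_1^4N^4$ on $I$ is valid since $N\theta/2\le 1/4$ there, and the split at $\theta=2/N$ does give $\int_0^\pi J(\theta)\sin^2\theta\,d\theta\ll N$.

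One caveat: as stated, the Proposition bounds the Fourier coefficients of the $F^+_{I,M}$ from Lemma~\ref{lemma-RT}, whereas you bound those of a different majorant. For the application in Section~\ref{section3} this is immaterial — only the existence of \emph{some} degree-$M$ polynomial majorizing $\chi_{[0,1/M]}$ with coefficients $O(1/M^2)$ is used, and (as you observe) your polynomial also satisfies the three properties of Lemma~\ref{lemma-RT}, so the substitution is seamless. But if one wants the literal statement about the Beurling--Selberg polynomial, one must analyze that construction directly, as the paper does.
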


\begin{proof}
	
	Since the $U_n$ are an orthonormal basis, for $0 \leq n \leq M$ we have 
	\begin{align} \nonumber
	\hat{F}^+_{I, M}(n) &= \int_{0}^\pi F^+_{I, M} (\theta) U_n(\cos \theta) \sin^2{\theta} \; d\theta \\ \label{2-integrals}
	&= \int_{0}^\pi  \chi_{I} (\theta) U_n(\cos \theta) \sin^2{\theta} \; d\theta+ \int_{0}^\pi \left( F^+_{I, M} (\theta)  - \chi_I(\theta) \right) U_n(\cos \theta) \sin^2{\theta} \; d\theta ,
	\end{align}
	The first integral of \eqref{2-integrals} is easily bounded by using the fact that $$U_n(\cos{\theta}) = \frac{\sin{\left( (n+1) \theta \right)}}{\sin \theta},$$ yielding
	\begin{eqnarray*}
		\int_0^\pi \chi_{I}(\theta) \; U_n(\cos{\theta})  \sin^2{\theta} \; d\theta &=&  \int_0^{1/M} \sin{\left((n+1) \theta \right)}  \sin{\theta} \; d\theta 
		\ll \frac{1}{M^2}.
	\end{eqnarray*}

For the second integral of \eqref{2-integrals}, we must bound the distance between the approximation of length $M$ and $\chi_I(\theta)$. We recall the definition of $F^+_{I,M}(\theta)$. For any $J = [0, \beta] \subseteq [0,1]$, it is straightfoward to see that
\[
\chi_J(x) = \beta + s(x-\beta) + s( - x),
\]
where $s(x)$ denotes the saw-tooth function
\[
s(x) = \begin{cases} \left\{ x \right\} - 1/2 & \mbox{if $x \not\in \Z$} \\ 0 & \mbox{if $x \in \Z$}. \end{cases}
\]
Next, recall that the Beurling polynomial, $B_M(x)$, is defined by
\[
B_M(x)=V_M(x) + \frac{1}{2(M+1)} \Delta_{M+1}(x),
\]
where $\Delta_M(x)$ is the Fejer kernel given by 
\[
\Delta_{M}(x) = \frac{1}{M} \left( \frac{\sin{\pi M x}}{\sin{\pi x}} \right)^2,
\]
and $V_M(x)$ is the Vaaler polynomial given by 
\begin{align*}V_M(x)&= \dfrac{1}{M+1}\sum\limits_{k=1}^{M}\left(\dfrac{k}{M+1} - \dfrac{1}{2} \right) \Delta_{M+1}\left(x - \dfrac{k}{M+1} \right)\\
	&\hspace{.25in}+\dfrac{1}{2\pi (M+1)} \sin(2\pi (M+1)x) -\dfrac{1}{2\pi} \Delta_{M+1}(x)\sin 2\pi x.
\end{align*}
We set 
\[
		S^+_{J,M}(x) = \beta + B_M(x-\beta) + B_M(- x).
\]
When $I = [0, 1/M]$, setting the interval $J = [0,  1/(2 \pi M)] \subseteq [0,1/2]$, we have that
\[
F^+_{I,M}(\theta) = S^+_{J,M}  \left( \frac{\theta}{2 \pi} \right) + S^+_{J,M} \left( - \frac{\theta}{2 \pi} \right).
\]
With the change of variable $x = \theta/2 \pi$ and $\beta = 1/(2 \pi M)$, we find
	\begin{eqnarray*}
		F^+_{I,M}(\theta) - \chi_I(\theta) &=& S^+_{J, M}(x ) + S^+_{J,M}(-x ) -  \chi_J(x) - \chi_J(-x) \\
		&=& B_M(x-\beta) - s(x-\beta) + B_M(-x) - s(-x) \\
		&& + B_M(-x-\beta) - s(-x-\beta) + B_M(x) - s(x).
	\end{eqnarray*}
	The second integral of \eqref{2-integrals} then writes as
\begin{eqnarray*}
&&\hspace{-.5in} 2\pi \int_{-1/2}^{1/2} \left( B_M(x) - s(x) \right) \; U_n(\cos 2 \pi x) \; \sin^2{(2 \pi x) } \; dx; \\
&&+\; 2\pi \int_{-1/2}^{1/2}  \left( B_M(x-\beta) - s(x-\beta) \right) \; U_n(\cos 2 \pi x) \; \sin^2{(2 \pi x) } \; dx.\end{eqnarray*}
	We now compute the second integral in the above expression, since upon taking $\beta = 0$ we will recover the first integral. From the definition of the polynomial $B_M$, we have
	\begin{eqnarray}  \nonumber
	&&\hspace{-1in} \int_{-1/2}^{1/2} \left( B_M(x-\beta) - s(x-\beta) \right) \; U_n(\cos 2 \pi x) \; \sin^2{(2 \pi x) } \; dx \\
	&=& \label{vaaler}
	\int_{-1/2}^{1/2} \left( V_M(x-\beta) - s(x-\beta) \right) \; U_n(\cos 2 \pi x) \; \sin^2{(2 \pi x) } \; dx \\ \label{fejer}
	&& +  \; \frac{1}{2(M+1)} \int_{-1/2}^{1/2} \Delta_{M+1}(x-\beta) \; U_n(\cos 2 \pi x) \; \sin^2{(2 \pi x) } \; dx.
	\end{eqnarray}
	To estimate \eqref{vaaler}, we use the bound (for  $|x|\leq 1/2$)
	\begin{eqnarray*}
		V_M(x-\beta) - s(x-\beta) &\ll& \min{ \left( 1, \frac{1}{M^3 |x-\beta|^3} \right)}  \\[.15in]&\ll& 			\begin{cases} 
	1 & \text{if }|x-\beta| < 1/M \\[.15in]
	(M |x-\beta|)^{-3} & \text{if }1/M \leq |x-\beta| \leq 1/2, \end{cases}
	\end{eqnarray*}
	and find, for $\beta = 1/(2 \pi M) \ll 1/M$, that
	\begin{align*}
	\int_{-1/2}^{1/2} &\left( V_M(x-\beta) - s(x-\beta) \right) \; U_n(\cos 2 \pi x) \; \sin^2{(2 \pi x) } \; dx \\
	&\ll \int_{\beta-1/M}^{\beta +1/M} \left|  \sin{\left((n+1) 2 \pi x\right)} \;\sin{(2 \pi x)} \right| \; dx \\ 
	&\hspace{.25in}+\frac{1}{M^3}  \left(  \int_{\beta+1/M}^{1/2} + \int_{-1/2}^{\beta-1/M}  \right) |x-\beta|^{-3} \;  \left|  \sin{\left((n+1) 2 \pi x\right)} \;\sin{(2 \pi x)} \right| \; dx \\
	&\ll \int^{\beta+1/M}_{0} x \;dx + \frac{1}{M^3} \int_{\beta+ 1/M}^{1/2} \frac{x}{(x-\beta)^{3}} ~dx\\
	&\ll \frac{1}{M^2} + \frac{1}{M^3} \int_{\beta+ 1/M}^{1/2} \left(\frac{1}{(x-\beta)^2}+\frac{\beta}{(x-\beta)^{3}} \right) ~dx\\
	&\ll \frac{1}{M^2}.
	\end{align*}
	It remains to show that the expression in \eqref{fejer} is also $\ll 1/M^2$. First note that we may write 
	\begin{equation}\label{Un}
	U_n(\cos(2\pi x)) = \dfrac{\sin((n+1)2\pi x)}{\sin(2\pi x)}.
	\end{equation} 
	Moreover, the Fejer kernel may alternatively be expressed as
	
	\[
	\Delta_{M}(x) = \frac{1}{M}\sum_{k=0}^{M-1}D_k(x),
	\]
where $D_k(x)$ is the $k$-th order Dirichlet kernel that has a closed form expression given by $$ D_k(x)= 1 \; + \; 2\sum_{j=1}^{k} \cos(2\pi jx).$$ 
Thus, we may express $\Delta_{M+1}(x)$ as 
		\begin{equation}\label{Fejer-Dirichlet sum}
		\Delta_{M+1}(x) = \frac{1}{M+1} \sum_{k=0}^{M} \left(1 + 2\sum_{j=1}^{k} \cos(2\pi j x)\right).
		\end{equation} 
From \eqref{Un}, \eqref{Fejer-Dirichlet sum}, and a trigonometric sum-difference formula, we have
		\begin{align}
		\nonumber(M&+1)\int_{-1/2}^{1/2} \Delta_{M+1}(x-\beta) \; U_n(\cos 2 \pi x) \; \sin^2{(2 \pi x) } \; dx\\
		\nonumber&=(M+1)\int\limits_{-1/2}^{1/2} \Delta_{M+1}(x-\beta) \, \sin((n+1)2\pi x)\sin(2\pi x) \;dx\\
		\nonumber&=\frac{1}{2} \int\limits_{-1/2}^{1/2} \left( (M+1) + 2\sum_{k=1}^{M}\sum_{j=1}^{k} \cos(2\pi j (x-\beta)) \right) (\cos(2\pi n x)\; -\; \cos(2\pi(n+2)x)) \;dx\\
		\nonumber&=\sum_{k=1}^{M}\sum_{j=1}^{k} \int_{-1/2}^{1/2} \cos(2\pi j (x-\beta))\big(\cos(2\pi n x) -\cos(2\pi (n+2) x)\big) ~dx\\ \label{two-parts-1}
		&=\sum_{k=1}^{M}\sum_{j=1}^{k} \int_{-1/2}^{1/2} \cos(2\pi j (x-\beta))\cos(2\pi n x)\, dx\\ \label{two-parts-2}
		&\hspace{.75in} -\sum_{k=1}^{M}\sum_{j=1}^{k} \int_{-1/2}^{1/2} \cos(2\pi j (x-\beta))\cos(2\pi (n+2) x)\big) ~dx .
		\end{align}
		Using trigonometric identities, we can rewrite the sum in \eqref{two-parts-1} as
		\begin{align}\label{first-two-parts} \sum_{k=1}^{M}\sum_{j=1}^{k} \int_{-1/2}^{1/2} 	\left(  \cos(2\pi j\beta)\cos(2\pi j x) +\sin(2\pi j\beta)\sin(2\pi j x)\right) \cos(2\pi n x) \; dx.
		\end{align}
		Recall that
		\begin{equation*}
		\int_{-1/2}^{1/2}\cos(m\pi x)\cos(n \pi x) \;dx =
		\begin{cases} \frac{1}{2} & \text{if $m=n$}\\
		0 & \text{otherwise,}
		\end{cases}
		\end{equation*} 
		and that for any $m,n \in\Z$,
		\begin{equation*}
		\int_{-1/2}^{1/2}\cos(m\pi x)\sin(n \pi x) \; dx= 0.
		\end{equation*}
		Therefore, the only term that survives in the inner sum over $j$ in  \eqref{first-two-parts} is the term $j=n$. This gives \begin{equation*}
		\sum_{k=1}^{M}\sum_{j=1}^{k} \int_{-1/2}^{1/2} \cos(2\pi j (x-\beta))\cos(2\pi n x)\, dx = \frac{M-n+1}{2} \cos(2\pi n\beta),
		\end{equation*}
		and a similar calculation gives that the sum in \eqref{two-parts-2} is 
		\begin{equation*}
		\sum_{k=1}^{M}\sum_{j=1}^{k} \int_{-1/2}^{1/2} \cos(2\pi j (x-\beta))\cos(2\pi (n+2) x)\, dx =
		\dfrac{M-(n+2)+1}{2} \cos(2\pi (n+2)\beta)
		\end{equation*} if $1\leq n\leq M-2$ and equal to $0$ if $n= M-1, M$. Thus, for $1\leq n\leq M-2$,
		\begin{align}\label{Fejer-integral}
		\nonumber (M+1)&\int_{-1/2}^{1/2} \Delta_{M+1}(x-\beta) \; U_n(\cos 2 \pi x) \; \sin^2{(2 \pi x) } \; dx\\ \nonumber
		&=\frac{M-n+1}{2} \cos(2\pi n\beta) - \frac{M-(n+2)+1}{2} \cos(2\pi (n+2)\beta)\\
		&= (M-n+1) \sin(2\pi(n+1)\beta)\sin(2\pi\beta) + \cos(2\pi(n+2)\beta).
		\end{align} while for $n= M-1, M$ we have 
		\begin{align}\label{Fejer-integral2}
		\nonumber (M+1)&\int_{-1/2}^{1/2} \Delta_{M+1}(x-\beta) \; U_n(\cos 2 \pi x) \; \sin^2{(2 \pi x) } \; dx\\ 
		&=\frac{M-n+1}{2} \cos(2\pi n\beta).
		\end{align} Dividing by $2(M+1)^2$, the integral in \eqref{fejer} is $\ll 1/M^2$ for $\beta= 1/(2\pi M)$. 
Setting $\beta=0$ in  \eqref{Fejer-integral} and \eqref{Fejer-integral2}, we find that for non-negative integers $n,M$ with $M\geq 1$,

		\begin{equation*} 
		\int\limits_{-1/2}^{1/2} \left(\frac{\sin((M+1)\pi x)}{\sin(\pi x)}\right)^2 \, U_n(\cos 2\pi x)\sin^2(2\pi x) dx =
		\begin{cases}
		1 & \text{if $0\leq n<M$}\\
		\frac{1}{2} & \text{if $n=M$.}
		\end{cases}
		\end{equation*}
	We have thus shown that for $\beta=1/(2 \pi M)$, 
	$$\int_{-1/2}^{1/2} \left( B_M(x-\beta) - s(x-\beta) \right) \; U_n(\cos{2 \pi x}) \; \sin^2{(2 \pi x) } \; dx \ll \frac{1}{M^2}, $$ 
	and using $\beta=0$ in the above formulas, we have
	$$\int_{-1/2}^{1/2} \left( B_M(x) - s(x) \right) \; U_n(\cos{2 \pi x}) \; \sin^2{(2 \pi x) } \; dx \ll \frac{1}{M^2}.$$ 
	This completes the proof of the Proposition.
	
\end{proof} 

\noindent{\it Remark:} The results of Proposition \ref{improved-FC} also hold for the coefficients $\hat{F}^-_{I,M}(n)$, following appropriate minor changes, but this is not needed for our application.

\section{Proof of Theorem \ref{main}}\label{section3}
We adapt the arguments of \cite{RT2017} to prove  Theorem \ref{main}, using the stronger bound on the size of the Fourier coefficients computed in Proposition \ref{improved-FC}. To estimate the prime counting function
	$$\# \{ x\leq p< 2 x : a_p(E) = [2\sqrt{p}] \} , $$
	 we first perform the change of variable $a_p(E) = 2\sqrt{p} \cos \theta_p(E)$. Let $I_\varepsilon$ be an interval of the form $[0, \varepsilon] \subseteq [0,\pi/2]$ and $I'_\varepsilon = \left[ \cos(\varepsilon), 1 \right]$ is such that
	\begin{eqnarray*} \cos\theta_p(E) \in I'_{\varepsilon} \iff \theta_p(E) \in I_{\varepsilon}.\end{eqnarray*}  If $\varepsilon= \varepsilon(x)$  is such that 
	\begin{equation} \label{bound-for-epsilon} \cos\varepsilon \leq 1- x^{-1/2}, \end{equation}
	 then using $ x \leq p < 2x$, we have
	\begin{equation*}
     \cos \varepsilon \leq 1-\frac{1}{x^{1/2}}< 1-\frac{1}{2\sqrt{p}} < 1-\frac{\{2\sqrt{p}\}}{2\sqrt{p}} <1.
	\end{equation*} 
	Using this, we obtain the upper bound
	\begin{equation} \nonumber
	\begin{split}
	\# \{ x\leq p< 2 x : a_p(E) = [2\sqrt{p}] \} &= \# \left\{ x\leq p< 2 x : \frac{ a_p(E)}{2\sqrt{p}} = 1- \frac{\{2\sqrt{p}\}}{2\sqrt{p}} \right\}\\
	&\leq \# \{ x\leq p <2x: \cos\theta_p(E) \in I'_\varepsilon\}\\
	&= \# \{ x\leq p <2x: \theta_p(E) \in I_\varepsilon\}\\
	&= \sum\limits_{x\leq p<2x} \chi_{I_\varepsilon} (\theta_p(E)),
	\end{split}
	\end{equation} 
 where for any interval $I$, $\chi_I$ is the characteristic function of the interval.
 
 Let $\varepsilon= 1/M$ so that $I_\varepsilon= [0, 1/M]$, where $M$ is chosen later. Using the first property in Lemma \ref{lemma-RT}, we have 
\begin{equation}\label{upper-bound for char-function}
	\begin{split}
		\sum\limits_{x\leq p<2x} \chi_{I_\varepsilon} (\theta_p) &\leq  \sum\limits_{n=0}^M \hat{F}^{+}_{I_\varepsilon,M}(n)\sum\limits_{x\leq p<2x}U_n(\cos\theta_p(E))\\
		&\leq \sum\limits_{n=0}^M |\hat{F}^{+}_{I_\varepsilon,M}(n)| \left|\sum\limits_{x\leq p<2x}U_n(\cos\theta_p(E))\right|.
	\end{split}
\end{equation}
To estimate the quantity $\sum\limits_{x\leq p<2x}U_n(\cos\theta_p(E))$, as in  \cite{RT2017}, we get sharper estimates by weighting the contribution from primes using a test function which is a pointwise upper bound for the characteristic function on $[x,2x]$. Let \begin{equation} \label{def-g}
	g(y) =\begin{cases}
		\exp\left( \frac{4}{3} + \frac{1}{(y-\frac{1}{2})(y-\frac{5}{2})}\right) &\text{if } \frac{1}{2} < y < \frac{5}{2},\\
		0 &\text{ otherwise}
	\end{cases}
\end{equation} and $g_x(y) = g(y/x)$. Using the bound $1\leq \frac{\log p}{\log x}$ for all $x\leq p <2x$, this allows us to write 
\begin{equation}\label{RS-1}
	\left|\sum\limits_{x\leq p<2x}U_n(\cos\theta_p(E))\right| \leq \frac{1}{\log x} \left| \sum\limits_{p}U_n(\cos\theta_p(E))g_x(p)\log p\right|.
\end{equation}
We next use a result of \cite{RT2017} stated in the following form.
\begin{prop}[{\cite[Proposition 3.5]{RT2017}}]\label{newRT} For each $n \geq 0$, 
	assume that the L-function $L(s, \text{Sym}^n(E))$ is entire (with the exception of a simple pole at $s=1$ when $n=0$), satisfies the functional equation \eqref{FE-Sym} and the Generalized Riemann Hypothesis. Then, we have
	\begin{equation}\label{RS-2}
		 \sum\limits_{p}U_n(\cos\theta_p(E))g_x(p)\log p \; \ll_E \; \delta_{n,0} x + \sqrt{x} n\log n 
	\end{equation}
	where $\delta_{n,0}=1$ if $n=0$ and 0 otherwise.
\end{prop}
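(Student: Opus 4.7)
The plan is to prove this via the explicit formula for $L(s,\Sym^n(E))$ under GRH, following the standard recipe for passing from a Dirichlet series to a weighted prime sum. For $\mathrm{Re}(s)>1$, expanding the Euler product of $L(s,\Sym^n(E))$ and using the identity $\sum_{j=0}^n \alpha_p(E)^{jk}\overline{\alpha}_p(E)^{(n-j)k} = U_n(\cos(k\theta_p(E)))$ gives
\begin{equation*}
-\frac{L'}{L}(s,\Sym^n(E)) = \sum_{p\nmid N_E}\sum_{k\geq 1}\frac{U_n(\cos(k\theta_p(E)))\log p}{p^{ks}} + (\text{bad prime terms}).
\end{equation*}
With $\tilde g_x(s)=x^s\tilde g(s)$ denoting the Mellin transform of $g_x$, Mellin inversion then expresses the target sum as
\begin{equation*}
\sum_p U_n(\cos\theta_p(E))g_x(p)\log p \;=\; -\frac{1}{2\pi i}\int_{(c)}\frac{L'}{L}(s,\Sym^n(E))\,\tilde g_x(s)\,ds \;+\; E_1(x,n),
\end{equation*}
for any $c>1$, where $E_1(x,n)$ collects the contributions of prime powers $p^k$ with $k\geq 2$ and of the bad primes.

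Next, I would shift the contour from $\mathrm{Re}(s)=c$ down to $\mathrm{Re}(s)=-1$, using standard convexity bounds for $L'/L$ on vertical lines under GRH. By analyticity of $L(s,\Sym^n(E))$ apart from the simple pole at $s=1$ when $n=0$, this picks up the main term $\delta_{n,0}\tilde g_x(1)\ll x$ from the pole, a sum $-\sum_\rho \tilde g_x(\rho)$ over non-trivial zeros $\rho=\tfrac12+i\gamma$ (all on the critical line by GRH), and a further error from the trivial zeros and the residual integral on $\mathrm{Re}(s)=-1$ which can be controlled via the functional equation \eqref{FE-Sym}. Since $g$ is smooth and supported in $[\tfrac12,\tfrac52]$, $\tilde g$ has faster-than-polynomial decay on vertical lines; in particular, $|\tilde g_x(\rho)|=\sqrt{x}\,|\tilde g(\tfrac12+i\gamma)|$ decays rapidly in $|\gamma|$.

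The remaining task is to bound $\sum_\rho|\tilde g(\tfrac12+i\gamma_\rho)|$. A Riemann--von Mangoldt style argument applied to $L(s,\Sym^n(E))$ shows that the number of non-trivial zeros with $|\gamma|\leq T$ is $\ll\log\mathfrak{q}(n,T)$, where $\mathfrak{q}(n,T)$ is the analytic conductor at height $T$. Reading off the archimedean data from \eqref{FE-Sym}, the Gamma factors of $L(s,\Sym^n(E))$ have degree $n+1$ with shifts of size $O(n)$, while the finite conductor contributes $O_E(n)$ in logarithm. Hence $\log\mathfrak{q}(n,T)\ll_E n\log(n+T+2)$, and combining this zero count with the rapid decay of $\tilde g$ yields $\sum_\rho|\tilde g_x(\rho)|\ll_E \sqrt{x}\,n\log n$.

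Finally, $E_1(x,n)$ is easily controlled: using $|U_n|\leq n+1$ and the fact that the number of $p^k\in[x/2,5x/2]$ with $k\geq 2$ is $\ll\sqrt{x}/\log x$, the prime power terms with $k\geq 2$ contribute $\ll (n+1)\sqrt{x}$, and the bad prime terms contribute $O_E(1)$. Assembling all of the pieces gives the claimed bound $\ll_E \delta_{n,0}x+\sqrt{x}\,n\log n$. The main technical obstacle is obtaining the sharp $n\log n$ factor uniformly in $n$: this requires careful tracking of the archimedean conductor parameters of $\Sym^n(E)$ via the Gamma-factor data in \eqref{FE-Sym}, and a zero count that is explicit in both $n$ and $T$.
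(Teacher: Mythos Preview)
Your outline follows the same explicit-formula route as the paper (and as \cite{RT2017}): split off prime powers and bad primes, apply Mellin inversion and shift the contour, bound the sum over zeros via the analytic conductor, and reassemble. The archimedean zero-count you flag as the ``main technical obstacle'' is in fact already carried out in \cite{RT2017}; the paper simply quotes that part. Where your proposal thins out is exactly where the paper's appendix does its work: the statement here is \cite[Proposition~3.5]{RT2017} with the square-free hypothesis on $N_E$ removed, and the paper's proof is devoted to justifying two assertions you make only in passing.

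First, ``the finite conductor contributes $O_E(n)$ in logarithm'': when $N_E$ is square-free one has $N_{E,n}=N_E^n$ and this is immediate, but at primes of additive reduction the conductor exponent of $\Sym^n(E)$ is not simply $n$ times that of $E$, and for $p\in\{2,3\}$ there is a wild part. The paper invokes the case-by-case computation of Martin--Watkins \cite{MW} to obtain $\log N_{E,n}\ll n\log N_E$ in general. Second, ``the bad prime terms contribute $O_E(1)$'': this is not quite right and, more importantly, not justified. For $p\mid N_E$ the coefficient $\Lambda_{\Sym^n(E)}(p^m)$ is \emph{not} $U_n(\cos(m\theta_p))\log p$, so your appeal to $|U_n|\le n{+}1$ does not apply; one needs the explicit local Euler factors at primes of additive reduction (again from \cite{MW}) to obtain $|\Lambda_{\Sym^n(E)}(p^m)|\le (n{+}1)\log p$. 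The resulting contribution is $O_E(n)$ rather than $O_E(1)$, though this is still absorbed into $\sqrt{x}\,n\log n$. So your plan is sound, but a complete proof must supply these two non-archimedean inputs.
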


\begin{remark*}
Proposition 3.5 of \cite{RT2017} has the additional hypothesis that $N_E$ is square-free. We describe how to remove this hypothesis in Appendix \ref{appendix}. \end{remark*}
	
	Using \eqref{RS-1} and \eqref{RS-2} in \eqref{upper-bound for char-function} we now have 
	\begin{equation*}
	\sum\limits_{x\leq p<2x} \chi_{I_\varepsilon} (\theta_p(E)) \ll \frac{1}{\log x}\sum\limits_{n=0}^M |\hat{F}^{+}_{I_\varepsilon,M}(n)|\left(\delta_{n,0} x + \sqrt{x} n\log n \right).
	\end{equation*} 
	We now use Proposition \ref{new-Fourier} to bound the Fourier coefficients $ \hat{F}^{+}_{I_\varepsilon,M}(n)$. Doing so, the right hand side of the above equation is \begin{equation*}
	\begin{split}
	\ll & \dfrac{1}{M^2 \log x}\left( x + \sqrt{x} \sum_{n=1}^{M}n\log n\right)\\
	\ll &\frac{x}{M^2\log x} + \frac{x^{1/2}\log M}{\log x}.
	\end{split}
	\end{equation*} Letting $M=\left \lceil \dfrac{x^{1/4}}{(\log x)^{1/2}} \right\rceil $, we see that \eqref{bound-for-epsilon} is satisfied, and we have 
	\begin{equation*}
	\# \left\{ x \leq p < 2x \;:\; a_p(E) = [2 \sqrt{p}] \right\} \leq
	\sum\limits_{x\leq p<2x} \chi_{I_\varepsilon} (\theta_p(E)) \ll x^{1/2},
	\end{equation*} which completes the proof of Theorem \ref{main}.

\appendix
\section{}\label{appendix}
\subsection{Proof of Proposition \ref{newRT}}\label{appendixproof}
We now extend the proof of Proposition 3.5 of \cite{RT2017} to all non-CM elliptic curves over $\Q$, without assuming that $N_E$ is square-free, i.e. that the bad primes are primes of multiplicative reduction. This necessitates a bound on the conductor of $\text{Sym}^n(E)$, and computing the local factors at the bad primes of all reduction types. We summarize in sections \ref{A2} and \ref{A3} the work of Martin and Watkins \cite{MW} which gives us these estimates.

	Let $N_{E, n}$ be the conductor of $\text{Sym}^n(E)$. Notice that in this notation, $N_{E,1} = N_E$  (and $N_{E,0}=1$). It is conjectured that the $L(s, \text{Sym}^n(E))$ satisfy the functional equation
\begin{equation}\label{FE-Sym}
\Lambda(s, \text{Sym}^n(E)) = \varepsilon_n \, \Lambda(1-s, \text{Sym}^n(E)) ,
\end{equation}
where the root number $\varepsilon_n \in \C$ has absolute value 1, 
and the completed L-function is
$$
\Lambda(s, \text{Sym}^n(E)) = N_{E, n}^{s/2} \gamma(s, \text{Sym}^n(E)) L(s, \text{Sym}^n(E)),
$$
with the gamma factor 
$$
\gamma(s, \text{Sym}^n(E)) = \begin{cases} \left( 2^{1-s}  \pi^{-s} \right)^{(n+1)/2} \displaystyle \prod_{j=1}^{(n+1)/2} \Gamma\left( s + (j-1/2)(n-1) \right)  &
\mbox{if $n$ is odd} \\    & \\ \displaystyle  \pi^{-(s+n_2)/2} \Gamma((s+n_2)/2) \left( 2^{1-s}  \pi^{-s} \right)^{n/2} \prod_{j=1}^{n/2} \Gamma\left( s + j(n-1) \right) & \mbox{if $n$ is even.} 
\end{cases}$$
In the above, $n_2 = n/2 \mod 2.$

	 As in \cite{RT2017}, define the numbers $\Lambda_{\text{Sym}^n (E})(j)$ by 
	\begin{equation*}
	-\frac{L'}{L}(s, \text{Sym}^n(E)) =\sum_{j=1}^{\infty} \frac{\Lambda_{\text{Sym}^n (E)}(j)}{j^s}, \qquad \mbox{Re}(s) >1.
	\end{equation*} 
	For primes $p$ not dividing $N_E$, and $m \geq 1$, it is a straightforward computation to show that 
	\begin{equation} \label{goodprimes} \Lambda_{\text{Sym}^n(E)}(p^m) = U_n(\cos (m\theta_p(E)))\log p. \end{equation}
	Also, $\Lambda_{\text{Sym}^n(E)}(j)$ is zero when $j$ is not power of a prime. Thus, 
	\begin{eqnarray*}
	\sum\limits_{p}U_n(\cos\theta_p(E))g_x(p)\log p &=& \sum_{j=1}^{\infty} \Lambda_{\text{Sym}^n(E)}(j)g_x(j) - \sum\limits_{\substack{ j\geq 1 \\j=p^m, \,  m\geq 2\\ \text{or } j=p, \, p|N_E}} \Lambda_{\text{Sym}^n(E)}(j)g_x(j) \\&&+ \sum\limits_{p|N_E} U_n(\cos\theta_p(E))g_x(p)\log p.
	\end{eqnarray*}
We now show that for any integer $j$, we have
	$$\Lambda_{\text{Sym}^n(E)}(j) \ll (n+1) \Lambda(j),$$  where $\Lambda(j)$ is the usual von Mangoldt function.
	If $j=p^m$ and $p \nmid N_E$, the result is clear by \eqref{goodprimes}. Suppose now that $j=p^m$, for $p \mid N_E$ and $m \geq 1$.
	Using the formulas \eqref{mult}, \eqref{add1} and \eqref{add2} which give the Euler products at the bad primes of $-\frac{L'}{L}(s, \text{Sym}^n(E))$ for multiplicative, potentially multiplicative and potentially good reduction, the result follows easily.
Thus, we have 
		\begin{align}
	&  \sum\limits_{\substack{ j\geq 1 \\j=p^m, \,  m\geq 2\\ \text{or } j=p, \, p|N_E}} \Lambda_{\text{Sym}^n(E)}(j)g_x(j) - \sum\limits_{p\nmid N_E} U_n(\cos\theta_p(E))g_x(p)\log p  \nonumber \\
    &\hspace{1in} \ll (n+1) \left(\sum\limits_{\substack{x/2<p^m<5x/2\\ m\geq 2\\ p\nmid N_E}} \log p + \sum\limits_{\substack{x/2\leq p^m<5x/2\\ p|N_E}} \log p + \sum\limits_{p|N_E} \log p \nonumber \right)
    \\&\hspace{1in}
    \ll (n+1)\left( \sum\limits_{\substack{x/2<p^m <5x/2 \\ m\geq 2}}\log p + \log N_E \right)
	 \ll_E n\sqrt{x} \label{error-prime-powers}.
	\end{align}

Then, as in \cite{RT2017}, we have to estimate
\begin{eqnarray*}
 \sum_{j=1}^\infty  {\Lambda_{\text{Sym}^n (E)}(j)} \; g_x(j).
\end{eqnarray*}
This is done by first writing an explicit formula for the non-smoothed sum
$$\psi_{\text{Sym}^n(E)}(x) = \sum_{j \leq x}  {\Lambda_{\text{Sym}^n (E)}(j)},$$
and evaluating the residues at the poles, coming from the zeroes of $L(s, {\text{Sym}}^n(E))$ in the critical strips. In all those estimates, the authors use the fact that $N_{E,n}=N_E^n$, which leads to the bound $\log(N_{E,n}) \leq n \log{N_E}$. From  \eqref{bound-C} of section \ref{A2},  it follows  that without any hypothesis on the reduction type of $E$ at the bad primes, we have
$$\log{N_{E,n}} \ll n \log{N_E},$$
where the implied constant is absolute.
The argument of  \cite[Section 8]{RT2017} becomes
\begin{eqnarray}\nonumber
\sum_{j=1}^{\infty} \Lambda_{\text{Sym}^n(E)}(j)g_x(j) &=& \delta_{n,0} x \int_{0}^\infty g(t) \, dt + O\left( \sqrt{x} \,( n \log n + \log N_{E,n}) \right) \\
\label{Prop3.5estimate}
&\ll_E&  \delta_{n,0} x + \sqrt{x} \,n \log(n),\end{eqnarray}
where we recall that $g$ is defined by \eqref{def-g}.

	Comparing \eqref{Prop3.5estimate} and \eqref{error-prime-powers}, we complete the proof of Proposition \ref{newRT} without assumption on $N_E$.

	\subsection{The conductors $N_{E,n}$}\label{A2}
	
	We now summarize the results concerning the conductors $N_{E,n}$ from \cite[Section 3]{MW}. 
For each prime $p \mid N$, fix $\ell \neq p$, and let  $T_\ell(E)$ denote the Tate module at $\ell$. Let
$H_\ell(E) = \mbox{Hom}(T_\ell(E) \otimes \Q_\ell, \Q_\ell)$ and $I_p \leq \mbox{Gal}(\overline{\Q_p}/\Q_p)$ be the local inertia group at $p$. Define
$\epsilon_n(I_p)$ to be  the co-dimension of the subspace of $\mbox{Sym}^n(H_\ell(E))$ fixed by $I_p$. Then, we have that
\[
N_{E,n} = \prod_{p \mid N_E} p^{\epsilon_n(I_p) + \delta_n(p)},
\]
where  $\delta_n(p)$ is the wild part of the conductor.  If $p$ is a prime of multiplicative reduction, then $\epsilon_n(I_p) = n$ and $\delta_n(p) =0$ for all $n$, and then $N_{E,n} = N_E^n$ when $N_E$ is square-free as assumed in \cite[Conjecture 1.1 (a)]{RT2017}.

For the other cases, we first remark that the wild conductor $\delta_n(p)=0$ when $p \geq 5$ for all reduction types. For $p$ a prime of  potentially multiplicative reduction, we have that $\epsilon_n(I_p)=n+1$ if $n$ is odd, and $\epsilon_n(I_p)=n$ if $n$ is even. The wild conductors
$\delta_n(p)$ are always 0, except for the case $p=2$ and $n$ odd, where we have $\delta_n(2) = \frac{n+1}{2} \delta_1(2).$

For $p$ a prime of  potentially good reduction, the value of $\epsilon_{n}(p)$ depends on the inertia group of the local extension $G_p =\mbox{Gal}(\Q_p(E_\ell)/\Q_p)$, and the congruence of $n$ modulo 12. The values of $\epsilon_{n}(p)$ in all cases that arise are given in Table 1 of \cite{MW}, and we always have $0 \leq \epsilon_n(p) \leq n+1$. The wild conductors $\delta_n(2)$ and $\delta_n(3)$ are given in Tables 2 and 3 of \cite{MW}, and we have that $\delta_n(2) \leq 2(n+1)$ and $\delta_n(3) \leq (n+1)/2.$

We then have the bound 
\begin{eqnarray} \label{bound-C} N_{E,n} \leq   2^{6(n+1)}  3^{(n+1)/2} \; \prod_{\substack{p \mid N_E}} p^{(n+1)} \ll_E N_E^{6n}. \end{eqnarray} 
 We also remark that the computation of the conductor $N_{E,n}$ in \cite{MW} is the idea presented in \cite[Section 5]{Rouse} applied to the special case of elliptic curves.
 
 \subsection{The local factors at the bad primes}\label{A3}

Next, we summarize the results concerning Euler factors at primes of bad reduction from \cite[Section 3]{MW}. Since we are using the normalized L-function $L(s, E)$ defined by \eqref{normL}, and 
Martin and Watkins are using the non-normalized L-function, we adjust their result accordingly using the fact that
$$
L(s, \mbox{Sym}^n(E)) = L_{\text{non-norm}}(s + n/2, \mbox{Sym}^n(E)).
$$
Let $L_p(s, \Sym^n(E))$ be the Euler factors at the bad primes $p$  of $L(s, \mbox{Sym}^n(E))$.

If $p$ is a prime of multiplicative reduction or potentially multiplicative reduction, then $$ L_p(s, \Sym^n(E))= \left( 1 - \frac{a_{p,n}}{p^{s+n/2}} \right)^{-1},$$ 
where $a_{p,n} \in \left\{ 0, \pm 1 \right\}$, and then
\begin{equation} \label{mult}
\sum_{m=1}^{\infty} \frac{\Lambda_{\Sym^n(E)}(p^m)}{p^{ms}} = \sum_{m=1}^{\infty} \frac{ \log p}{p^{sm}} \frac{a_{p,n}^m}{p^{nm/2}}.\end{equation}

In the case where $p$ is a prime with potentially good reduction, there are 2 cases depending if the local decomposition group 
$G_p$ is abelian or not. When $G_p$ is abelian then the local inertia group is cyclic of order $d$, where $d=2,3,4$ or $6$, and 
$$
L_p(s, \Sym^n(E)) = \prod_{{0\leq k\leq n}\atop {d|(2k-n)}} \left( 1- \frac{\beta_p(E)^{n-k} \overline{\beta}_p(E)^k}{p^{s+n/2}} \right)^{-1},
$$
where $\beta_p(E)$ is obtained by counting points on a $p^r$-th quadratic twist of $E$ (which is non-singular) where $r$ depends on the $p$-valuation of the coefficients of $E$. It follows that $|\beta_p(E)|=p^{1/2}$. Then,
\begin{equation} \label{add1}
\sum_{m=1}^{\infty} \frac{\Lambda_{\Sym^n(E)}(p^m)}{p^{ms}} = \sum_{m=1}^{\infty} \frac{ \log p}{p^{sm}} \frac{1}{p^{nm/2}}
\sum_{\substack {0\leq k\leq n \\ {d|(2k-n)}}} \left( \beta_p(E)^{n-k} \overline{\beta}_p(E)^k \right)^m.
\end{equation}
On the other hand, when $G_p$ is non-abelian,  we have
$$
L_p(s, \Sym^n(E)) = \left(  1 - \frac{(\pm 1) (-p)^{n/2}}{p^{s+n/2}} \right)^{-(n+1-\epsilon_n(I_p))}
$$
and
\begin{equation} \label{add2}
\sum_{m=1}^{\infty} \frac{\Lambda_{\Sym^n(E)}(p^m)}{p^{ms}}  = \sum_{m=1}^\infty \frac{(\pm 1)^m (-1)^{mn/2} \log p}{p^{sm}}
 \left( {n+1-\epsilon_n(I_p)} \right).
\end{equation}

\bibliographystyle{alpha}
\bibliography{ExtremalPrimesBIB.bib}

\end{document}